\documentclass[11pt]{article}
\usepackage[T1]{fontenc}
\usepackage[utf8]{inputenc}
\usepackage{lmodern}
\usepackage{amsmath,amssymb,amsthm,mathtools}
\usepackage{enumitem}
\usepackage{geometry}
\usepackage{microtype}
\usepackage{hyperref}
\hypersetup{colorlinks=true,linkcolor=black,citecolor=black,urlcolor=black}

\newtheorem{theorem}{Theorem}[section]
\newtheorem{proposition}[theorem]{Proposition}
\newtheorem{lemma}[theorem]{Lemma}
\newtheorem{corollary}[theorem]{Corollary}
\theoremstyle{definition}
\newtheorem{definition}[theorem]{Definition}
\newtheorem{example}[theorem]{Example}
\theoremstyle{remark}
\newtheorem{remark}[theorem]{Remark}

\newcommand{\Der}{\operatorname{Der}}
\newcommand{\adSN}{\operatorname{ad}_{SN}}

\title{A Note on the Schouten--Nijenhuis Bracket of Lie--Rinehart Algebras}
\author{Servais Cyr Gats\'e \and Basile Guy Richard Bossoto \and Irmely Gladesh Mabanza Nsiloulou}
\date{}

\begin{document}
\maketitle

\begin{center}
\emph{Dedicated to the memory of Professor Eug\`ene Okassa.}
\end{center}

\begin{abstract}
Let $A$ be a commutative associative unital algebra over a field $K$ of characteristic zero, and let $(G,\rho)$ be a Lie--Rinehart algebra over $A$. We revisit the Gerstenhaber structure carried by the exterior algebra $\bigwedge_A G$ from the point of view of graded derivations. Particular attention is paid to the well-definedness of the Schouten--Nijenhuis extension over the coefficient algebra $A$: the Lie--Rinehart identity is shown to provide exactly the correction terms needed for the bracket to descend to the $A$-balanced exterior algebra. After establishing graded antisymmetry, the Leibniz rule and the graded Jacobi identity without circularity, we study the inner graded derivations $\adSN(P)=[P,\cdot]_{SN}$ and prove
\[
[\adSN(P),\adSN(Q)]=\adSN([P,Q]_{SN}).
\]
Finally, we give a complete proof of the converse construction: Gerstenhaber brackets on $\bigwedge_A G$ are in one-to-one correspondence with Lie--Rinehart structures on $G$.
\end{abstract}

\noindent\textbf{Keywords:} Lie--Rinehart algebra; Gerstenhaber algebra; Schouten--Nijenhuis bracket; graded derivation; exterior algebra.\\
\textbf{MSC 2020:} 17B60, 17B66, 53D17.

\section{Introduction}
Lie--Rinehart algebras provide an algebraic framework combining a Lie algebra structure with a module structure over a commutative algebra. They are the algebraic counterparts of Lie algebroids \cite{Pradines1966} and occur naturally in differential geometry, Poisson geometry and homological algebra. The basic notion goes back to Herz, Palais and Rinehart \cite{Herz1953,Palais1961,Rinehart1963}; its cohomological background is rooted in \cite{ChevalleyEilenberg1948,HKR1962}. Gerstenhaber algebras were introduced in \cite{Gerstenhaber1963}. The correspondence between Lie--Rinehart structures on an $A$-module $G$ and Gerstenhaber brackets on $\bigwedge_A G$ is due to Huebschmann \cite{Huebschmann1990}; see also \cite{Koszul1985,Kosmann1995,Xu1999}, and \cite{Witten1990} for the related antibracket formalism. Generalizations include hom-Lie--Rinehart algebras \cite{MandalMishra2018} and Lie--Rinehart--Jacobi algebras \cite{Okassa2007,Okassa2008,Okassa2011}; Poisson-type structures on exterior algebras are studied in \cite{GatseLikouka2023}.

Let $A$ be a commutative associative unital $K$-algebra and let $(G,\rho)$ be a Lie--Rinehart algebra over $A$. Its exterior algebra
\[
\mathcal G=\bigwedge_A G
\]
carries a natural Schouten--Nijenhuis bracket extending both the Lie bracket on $G$ and the action of $G$ on $A$ through the anchor. Together with the exterior product this gives a Gerstenhaber algebra.

The existence of this correspondence is not presented here as a new fact. Our purpose is to give a careful and self-contained derivation-based formulation in which the role of the anchor in the passage to the $A$-balanced exterior algebra is made explicit. This point is technically important: the Lie bracket on $G$ is not $A$-bilinear, and therefore a formula written first on decomposable tensors must be shown to be independent of the representatives chosen.

A second objective is to organize the construction so that no Gerstenhaber identity is used implicitly to prove itself. We first construct the Schouten--Nijenhuis bracket and prove graded antisymmetry and the Leibniz rule directly from the defining formulas. The graded Jacobi identity is then deduced from these two properties and from the Lie--Rinehart axioms on generators. Only afterwards do we establish, for homogeneous $P\in\bigwedge_A^pG$, the adjoint commutator relation
\[
[\adSN(P),\adSN(Q)]=\adSN([P,Q]_{SN}),
\]
which is interpreted as the adjoint representation of the shifted graded Lie algebra rather than as a route to Jacobi.

The paper is organized as follows. Section 2 recalls Lie--Rinehart and Gerstenhaber algebras. Section 3 gives the construction of the Schouten--Nijenhuis bracket, including a detailed verification of $A$-balancing, and proves antisymmetry and the Leibniz rule. Section 4 proves the graded Jacobi identity. Section 5 studies the inner graded derivations. Section 6 reconstructs the Lie--Rinehart structure from a Gerstenhaber bracket and proves the correspondence.

\section{Preliminaries}
\subsection{Lie--Rinehart algebras}
\begin{definition}
A \emph{Lie--Rinehart algebra over $A$} is an $A$-module $G$ endowed with a $K$-Lie bracket $[\cdot,\cdot]$ and an $A$-linear Lie algebra morphism
\[
\rho:G\longrightarrow\Der_K(A)
\]
such that
\begin{equation}\label{LR}
[x,ay]=a[x,y]+\rho(x)(a)y
\end{equation}
for all $a\in A$ and $x,y\in G$.
\end{definition}

By skew-symmetry, \eqref{LR} is equivalent to
\begin{equation}\label{LRleft}
[ax,y]=a[x,y]-\rho(y)(a)x.
\end{equation}

\begin{example}
For a smooth manifold $M$, the triple
\[
A=C^\infty(M),\qquad G=\mathfrak X(M),\qquad \rho=\mathrm{id}
\]
is a Lie--Rinehart algebra, since
\[
[X,fY]=f[X,Y]+X(f)Y.
\]
\end{example}

\begin{example}
For every commutative unital $K$-algebra $A$, $(\Der_K(A),\mathrm{id})$ is a Lie--Rinehart algebra over $A$.
\end{example}

\subsection{Gerstenhaber algebras}
Let
\[
\mathcal G=\bigwedge_A G=\bigoplus_{p\ge0}\mathcal G^p,
\qquad \mathcal G^p=\bigwedge_A^pG,
\]
with $\mathcal G^0=A$, $\mathcal G^1=G$, and $\mathcal G^p=0$ for $p<0$.

\begin{definition}[\cite{Gerstenhaber1963}]
A \emph{Gerstenhaber algebra} is a graded commutative associative algebra $(\mathcal G,\wedge)$ endowed with a $K$-bilinear bracket of degree $-1$ such that, for homogeneous $P\in\mathcal G^p$, $Q\in\mathcal G^q$ and $R\in\mathcal G^r$,
\begin{align}
[P,Q]&=-(-1)^{(p-1)(q-1)}[Q,P],\label{GAanti}\\
(-1)^{(p-1)(r-1)}[P,[Q,R]]&+(-1)^{(q-1)(p-1)}[Q,[R,P]]\notag\\
&+(-1)^{(r-1)(q-1)}[R,[P,Q]]=0,\label{GAjac}\\
[P,Q\wedge R]&=[P,Q]\wedge R+(-1)^{(p-1)q}Q\wedge[P,R].\label{GAleib}
\end{align}
\end{definition}

\subsection{Graded derivations}
\begin{definition}
A \emph{graded derivation of degree $d$} of $\mathcal G$ is a $K$-linear map $D:\mathcal G\to\mathcal G$ with $D(\mathcal G^k)\subseteq\mathcal G^{k+d}$ and
\[
D(P\wedge Q)=D(P)\wedge Q+(-1)^{dp}P\wedge D(Q),\qquad P\in\mathcal G^p .
\]
We denote by $\Der_K(\mathcal G)$ the graded space of graded derivations. The graded commutator
\[
[D_1,D_2]=D_1\circ D_2-(-1)^{d_1d_2}D_2\circ D_1
\]
of derivations of degrees $d_1,d_2$ is a graded derivation of degree $d_1+d_2$, and $\Der_K(\mathcal G)$ is a graded Lie algebra.
\end{definition}

\begin{lemma}\label{lem:dergens}
A graded derivation of $\mathcal G=\bigwedge_A G$ is uniquely determined by its values on $A$ and $G$.
\end{lemma}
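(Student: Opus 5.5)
The plan is to reduce to linearity plus generation: two graded derivations with the same values on $A$ and $G$ have a difference that is again a graded derivation vanishing on $A$ and $G$, and I will show such a derivation vanishes identically. Fix a graded derivation $D$ of degree $d$ with $D|_A=0$ and $D|_G=0$. Since graded derivations are $K$-linear and the defining condition is linear in $D$, the difference $D_1-D_2$ of two derivations of the same degree $d$ is again a graded derivation of degree $d$. So uniqueness follows from the vanishing statement. If $D_1,D_2$ have different degrees but agree on $A$ and $G$, I will first split into homogeneous components and compare componentwise; the homogeneous case is the core of the argument.

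The key structural input is that $\mathcal G=\bigwedge_A G$ is generated as a $K$-algebra by $\mathcal G^0=A$ and $\mathcal G^1=G$. Concretely, every element of $\mathcal G^p$ with $p\ge1$ is a finite sum of decomposable elements $x_1\wedge\cdots\wedge x_p$ with $x_i\in G$. Scalars $a\in A$ can be absorbed into $x_1$ because $a\,x_1\wedge\cdots\wedge x_p=(ax_1)\wedge\cdots\wedge x_p$. Moreover, the $A$-module structure on $\mathcal G$ is multiplication by degree-zero elements: $a\cdot P=a\wedge P$. This holds because the exterior algebra is a quotient of the tensor algebra over $A$.

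I will then argue by induction on $p$ that $D(x_1\wedge\cdots\wedge x_p)=0$. The cases $p=0$ and $p=1$ are the hypotheses. For the inductive step, the Leibniz rule with $P=x_1\in\mathcal G^1$ gives
\[
D(x_1\wedge R)=D(x_1)\wedge R+(-1)^{d}x_1\wedge D(R),
\]
where $R=x_2\wedge\cdots\wedge x_p$. Both terms vanish, by the base case and by the induction hypothesis respectively. Additivity of $D$ then extends the vanishing to all of $\mathcal G^p$, and hence to all of $\mathcal G$.

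The main obstacle is the well-definedness subtlety. Since $D$ is assumed to be a map on $\mathcal G$ already, there is no issue of choosing representatives: we only evaluate $D$ on honest elements and use the fact that they are sums of products of generators. The one point to state carefully is that $D$ is only $K$-linear, not $A$-linear. We never need $A$-linearity, because every element is written as a sum of $\wedge$-products of elements of $A$ and $G$, and the Leibniz rule handles products with $A$ as well: $D(a\wedge P)=D(a)\wedge P+a\wedge D(P)$, since $a$ has degree $0$. This is why the hypothesis must include the values on $A$, not only on $G$. A derivation of degree $d\neq0$ is automatically zero on $A$ only if $\mathcal G^{d}$ allows it; in general $D|_A:A\to\mathcal G^d$ is genuine data, and the argument uses it only through the Leibniz rule.
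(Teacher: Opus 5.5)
Your proof is correct and follows the same route as the paper: $\mathcal G$ is generated by $A\cup G$, so the graded Leibniz rule together with $K$-linearity determines $D$ everywhere, and your difference-plus-induction argument simply makes this explicit. The aside about derivations of different degrees is unnecessary: if the degrees differ, agreement on $A\cup G$ forces both derivations to vanish there, and the homogeneous argument then applies to each one separately.
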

\begin{proof}
As a $K$-algebra, $\mathcal G$ is generated by $A\cup G$: every element is a finite sum of products $a\,x_1\wedge\cdots\wedge x_p$. The graded Leibniz rule determines $D$ on such products from its values on $a,x_1,\dots,x_p$, and $K$-linearity gives the result.
\end{proof}

\section{Construction of the Schouten--Nijenhuis bracket}
\subsection{Defining formulas}
For $x_1,\dots,x_p\in G$ we write
\[
X=x_1\wedge\cdots\wedge x_p,\qquad
\widehat X_i=x_1\wedge\cdots\widehat{x_i}\cdots\wedge x_p,
\qquad\text{so that}\qquad x_i\wedge\widehat X_i=(-1)^{i-1}X,
\]
and similarly $Y=y_1\wedge\cdots\wedge y_q$, $\widehat Y_j$.

For $p,q\ge1$ define
\begin{equation}\label{SNformula}
[X,Y]_{SN}=\sum_{i=1}^p\sum_{j=1}^q(-1)^{i+j}[x_i,y_j]\wedge\widehat X_i\wedge\widehat Y_j .
\end{equation}
For $a\in A$ and $p\ge1$ define
\begin{equation}\label{SNfunction}
[X,a]_{SN}=\sum_{i=1}^p(-1)^{p-i}\rho(x_i)(a)\,\widehat X_i,
\qquad
[a,X]_{SN}=\sum_{i=1}^p(-1)^{i}\rho(x_i)(a)\,\widehat X_i=(-1)^p[X,a]_{SN},
\end{equation}
and finally $[a,b]_{SN}=0$ for $a,b\in A$. In low degrees these formulas give
\begin{equation}\label{gens}
[a,b]_{SN}=0,\qquad [x,a]_{SN}=\rho(x)(a),\qquad [a,x]_{SN}=-\rho(x)(a),\qquad [x,y]_{SN}=[x,y].
\end{equation}

\begin{remark}
The sign $(-1)^{p-i}$ in \eqref{SNfunction} is forced by graded antisymmetry: for $p=2$ one must have $[u\wedge v,a]_{SN}=[a,u\wedge v]_{SN}=\rho(v)(a)u-\rho(u)(a)v$. It coincides with $(-1)^{i+1}$ only when $p$ is odd.
\end{remark}

\subsection{Compatibility with the $A$-balanced exterior algebra}
The space $\bigwedge_A^pG$ is the quotient of the free $K$-module on $G^p$ by the relations expressing $K$-multilinearity, $A$-balancing
\[
(\dots,ax_k,\dots,x_l,\dots)\sim(\dots,x_k,\dots,ax_l,\dots),
\]
and alternation (vanishing when two entries coincide). Hence a map defined on tuples descends to $\bigwedge_A^pG$ as soon as it is $K$-multilinear, $A$-balanced and alternating. Since the Lie bracket of $G$ is not $A$-bilinear, $A$-balancing is the non-trivial point. We first treat the two smallest cases.

\begin{lemma}\label{lem:degree21}
For $a\in A$ and $x,z,y\in G$,
\[
[(ax)\wedge z,y]_{SN}=[x\wedge(az),y]_{SN}.
\]
\end{lemma}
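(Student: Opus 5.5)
Apply \eqref{SNformula} with $p=2$, $q=1$ to each side and compare; the Lie--Rinehart identities \eqref{LR} and \eqref{LRleft} should produce exactly the corrections that make the two expressions agree. For $X=x_1\wedge x_2$ and $Y=y$, formula \eqref{SNformula} gives
\[
[x_1\wedge x_2,y]_{SN}=(-1)^{1+1}[x_1,y]\wedge x_2+(-1)^{2+1}[x_2,y]\wedge x_1=[x_1,y]\wedge x_2-[x_2,y]\wedge x_1 .
\]
This is an expression in $\bigwedge^2_AG$, where $A$-balancing already holds, so only the brackets $[\cdot,\cdot]$ of $G$ need attention.

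First, for the left-hand side take $x_1=ax$ and $x_2=z$. We get $[ax,y]\wedge z-[z,y]\wedge ax$. By \eqref{LRleft}, $[ax,y]=a[x,y]-\rho(y)(a)x$, so the left-hand side equals
\[
a[x,y]\wedge z-\rho(y)(a)\,x\wedge z-a\,[z,y]\wedge x .
\]
Second, for the right-hand side take $x_1=x$ and $x_2=az$. We get $[x,y]\wedge az-[az,y]\wedge x$. Again by \eqref{LRleft}, $[az,y]=a[z,y]-\rho(y)(a)z$, so the right-hand side equals
\[
a[x,y]\wedge z-a[z,y]\wedge x+\rho(y)(a)\,z\wedge x .
\]
Third, we compare the two. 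The terms $a[x,y]\wedge z$ and $a[z,y]\wedge x$ match, using $A$-bilinearity of $\wedge$ to move scalars. The anchor terms also match: $-\rho(y)(a)\,x\wedge z=\rho(y)(a)\,z\wedge x$ by alternation in $\bigwedge^2_AG$.

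The only delicate point is the sign bookkeeping for the anchor correction terms. These are the terms that fail to cancel naively, and the lemma works because alternation turns them into each other. I would double-check this step, in particular the sign $(-1)^{i+j}$ convention in \eqref{SNformula} for $i=2$, $j=1$. I would also note that the well-definedness of the right-hand expressions themselves only uses that $\wedge$ is already defined on $\bigwedge_AG$.
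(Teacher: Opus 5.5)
Your proof is correct and is essentially the paper's: both expand \eqref{SNformula} in degree $(2,1)$, apply \eqref{LRleft} on each side, and observe that the two anchor corrections equal $-\rho(y)(a)\,x\wedge z$. The only cosmetic difference is that the paper writes the $(2,1)$ formula as $[u\wedge v,y]_{SN}=[u,y]\wedge v+u\wedge[v,y]$, so the correction terms appear in identical form without your final appeal to alternation; the sign you flagged, $(-1)^{2+1}=-1$ with $\widehat X_2=x_1$, is right.
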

\begin{proof}
In degree $(2,1)$ formula \eqref{SNformula} gives
\[
[u\wedge v,y]_{SN}=[u,y]\wedge v+u\wedge[v,y].
\]
Therefore, using \eqref{LRleft},
\begin{align*}
[(ax)\wedge z,y]_{SN}
&=[ax,y]\wedge z+ax\wedge[z,y]
=a[x,y]\wedge z-\rho(y)(a)x\wedge z+a x\wedge[z,y],\\
[x\wedge(az),y]_{SN}
&=[x,y]\wedge az+x\wedge[az,y]
=a[x,y]\wedge z+a x\wedge[z,y]-\rho(y)(a)x\wedge z.
\end{align*}
The two expressions coincide.
\end{proof}

\begin{lemma}\label{lem:degree12}
For $a\in A$ and $x,y_1,y_2\in G$,
\[
[x,(ay_1)\wedge y_2]_{SN}=[x,y_1\wedge(ay_2)]_{SN}.
\]
\end{lemma}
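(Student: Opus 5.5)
Mirroring the proof of Lemma~\ref{lem:degree21}, I will first write formula \eqref{SNformula} explicitly in degree $(1,2)$. With $p=1$, $q=2$ and $\widehat X_1=1$, the signs $(-1)^{1+j}$ give
\[
[x,u\wedge v]_{SN}=[x,u]\wedge v-[x,v]\wedge u=[x,u]\wedge v+u\wedge[x,v].
\]
Both sides of the claimed identity are then expanded using the Lie--Rinehart identity \eqref{LR} in the form $[x,ay]=a[x,y]+\rho(x)(a)y$, which applies directly because the scalar sits in the second argument of the bracket of $G$.

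For the left-hand side,
\[
[x,(ay_1)\wedge y_2]_{SN}=[x,ay_1]\wedge y_2+ay_1\wedge[x,y_2]
=a[x,y_1]\wedge y_2+\rho(x)(a)\,y_1\wedge y_2+a\,y_1\wedge[x,y_2].
\]
For the right-hand side,
\[
[x,y_1\wedge(ay_2)]_{SN}=[x,y_1]\wedge ay_2+y_1\wedge[x,ay_2]
=a[x,y_1]\wedge y_2+a\,y_1\wedge[x,y_2]+\rho(x)(a)\,y_1\wedge y_2.
\]
These use $A$-bilinearity of $\wedge$ in $\bigwedge_A^2G$ to move $a$ to the front. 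The two expressions agree term by term.

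The only delicate points are two. First, the sign bookkeeping in passing from \eqref{SNformula} to the two-term Leibniz-type form above (the $j=2$ term carries $(-1)^{3}=-1$, and swapping the factors in $[x,v]\wedge u$ produces another $-1$). Second, checking that the correction terms $\rho(x)(a)\,y_1\wedge y_2$ carry the same sign on both sides; they do, because in each case $\rho(x)(a)$ multiplies $y_1\wedge y_2$ with $y_1$ first. Note that no appeal to \eqref{LRleft} is needed here, in contrast to Lemma~\ref{lem:degree21}.
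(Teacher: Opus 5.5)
Your proposal is correct and follows essentially the same route as the paper. You reduce \eqref{SNformula} in degree $(1,2)$ to $[x,u\wedge v]_{SN}=[x,u]\wedge v+u\wedge[x,v]$, then expand both sides with \eqref{LR}; both sides yield the same three terms $a[x,y_1]\wedge y_2$, $a\,y_1\wedge[x,y_2]$ and $\rho(x)(a)\,y_1\wedge y_2$.
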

\begin{proof}
In degree $(1,2)$ formula \eqref{SNformula} gives $[x,u\wedge v]_{SN}=[x,u]\wedge v+u\wedge[x,v]$. Thus, by \eqref{LR},
\begin{align*}
[x,(ay_1)\wedge y_2]_{SN}
&=a[x,y_1]\wedge y_2+\rho(x)(a)y_1\wedge y_2+a y_1\wedge[x,y_2],\\
[x,y_1\wedge(ay_2)]_{SN}
&=a[x,y_1]\wedge y_2+a y_1\wedge[x,y_2]+\rho(x)(a)y_1\wedge y_2.
\end{align*}
Again the expressions are identical.
\end{proof}

\begin{proposition}[General balancing]\label{prop:balancing}
Formulas \eqref{SNformula} and \eqref{SNfunction} descend to well-defined $K$-bilinear maps on $\bigwedge_A G$.
\end{proposition}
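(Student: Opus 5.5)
We regard \eqref{SNformula} and \eqref{SNfunction} as maps on pairs of tuples $((x_1,\dots,x_p),(y_1,\dots,y_q))$, or on pairs $((x_1,\dots,x_p),a)$. For each argument separately we check the three conditions that define $\bigwedge_A^pG$ as a quotient: $K$-multilinearity, alternation and $A$-balancing. Once these hold in each slot, the map factors through $\bigwedge_A^pG\otimes_K\bigwedge_A^qG$ and therefore defines a $K$-bilinear map on $\bigwedge_A G$. $K$-multilinearity is immediate, because $[\cdot,\cdot]$, $\rho$ and $\wedge$ are all $K$-multilinear.

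For alternation we use the usual reduction: it suffices to show that swapping two adjacent entries $x_k,x_{k+1}$ changes the sign of the expression, and that the expression vanishes when $x_k=x_{k+1}$. In the sum \eqref{SNformula}, the terms with $i\notin\{k,k+1\}$ contain $\widehat X_i$, which still has $x_k,x_{k+1}$ adjacent. These terms therefore vanish when $x_k=x_{k+1}$ and change sign under the swap. The two remaining terms, $i=k$ and $i=k+1$, are exchanged by the swap, and because of the sign factor $(-1)^i$ their sum changes sign; when $x_k=x_{k+1}$ they cancel. The same argument works for \eqref{SNfunction} and for the $Y$-slot.

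The main step is $A$-balancing. Since alternation is already settled, any two positions can be brought adjacent, so it suffices to compare $x_k\mapsto ax_k$ with $x_l\mapsto ax_l$ for arbitrary $k\neq l$. Each of these is compared with the "reference" expression $a[X,Y]_{SN}$. Replacing $x_k$ by $ax_k$ in \eqref{SNformula} affects two kinds of terms. In the terms with $i\neq k$, the factor $ax_k$ sits inside $\widehat X_i$, and $a$ pulls out of the wedge. In the terms with $i=k$, applying \eqref{LRleft} gives $[ax_k,y_j]=a[x_k,y_j]-\rho(y_j)(a)x_k$. The upshot is
\[
[x_1\wedge\cdots\wedge ax_k\wedge\cdots\wedge x_p,Y]_{SN}=a[X,Y]_{SN}-\sum_{j=1}^q(-1)^{k+j}\rho(y_j)(a)\,x_k\wedge\widehat X_k\wedge\widehat Y_j .
\]
Because $x_k\wedge\widehat X_k=(-1)^{k-1}X$, the correction term equals $\sum_j(-1)^{j}\rho(y_j)(a)\,X\wedge\widehat Y_j$, which does not depend on $k$. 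This independence from $k$ is exactly the balancing statement, and it generalizes Lemma~\ref{lem:degree21}.

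The second slot is handled symmetrically using \eqref{LR}. There the correction term is $\sum_i(-1)^{i+l}\rho(x_i)(a)\,y_l\wedge\widehat X_i\wedge\widehat Y_l$. After moving $y_l$ into position, using $y_l\wedge\widehat Y_l=(-1)^{l-1}Y$ together with the graded commutativity signs, this becomes a multiple of $\widehat X_i\wedge Y$ with no dependence on $l$, generalizing Lemma~\ref{lem:degree12}.

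For \eqref{SNfunction} the argument is easier: $\rho$ is $A$-linear, so $\rho(ax_k)(b)=a\rho(x_k)(b)$, and both substitutions simply give $a[X,b]_{SN}$. The formula for $[a,X]_{SN}$ follows from this one, since it is defined as $(-1)^p[X,a]_{SN}$.

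The only real obstacle is the sign bookkeeping in the second slot. Moving $y_l$ past $\widehat X_i$, of degree $p-1$, introduces a factor $(-1)^{p-1}$; this factor is independent of $l$, so it does not break the independence from $l$.
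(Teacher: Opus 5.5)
Your proof is correct and follows essentially the paper's argument. You check $K$-multilinearity, alternation and $A$-balancing on tuples, and the key point is that the Lie--Rinehart correction $-(-1)^{k+j}\rho(y_j)(a)\,x_k\wedge\widehat X_k\wedge\widehat Y_j=(-1)^j\rho(y_j)(a)\,X\wedge\widehat Y_j$ does not depend on the position $k$. The differences are only cosmetic: the paper handles alternation for arbitrary $k<l$ directly and gets the second slot from the tuple-level antisymmetry $F(y;x)=-(-1)^{(p-1)(q-1)}F(x;y)$, whereas you use adjacent transpositions and compute the second-slot correction directly. Both routes are equally valid.
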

\begin{proof}
Denote by $F(x_1,\dots,x_p;y_1,\dots,y_q)$ the right-hand side of \eqref{SNformula}, an element of $\bigwedge_AG$. It is clearly $K$-multilinear.

\emph{Antisymmetry at the level of tuples.} Exchanging the roles of the two tuples, the term indexed by $(j,i)$ in $F(y;x)$ is
\[
(-1)^{i+j}[y_j,x_i]\wedge\widehat Y_j\wedge\widehat X_i
=-(-1)^{(p-1)(q-1)}(-1)^{i+j}[x_i,y_j]\wedge\widehat X_i\wedge\widehat Y_j,
\]
so that
\begin{equation}\label{tupleanti}
F(y;x)=-(-1)^{(p-1)(q-1)}F(x;y).
\end{equation}
Consequently it suffices to prove that $F$ is alternating and $A$-balanced in the first group of variables.

\emph{Alternation.} Suppose $x_k=x_l$ with $k<l$. Every term with $i\notin\{k,l\}$ contains $x_k\wedge x_l=0$. For the two remaining terms, $\widehat X_l=(-1)^{l-k-1}\widehat X_k$, whence
\[
(-1)^{k+j}[x_k,y_j]\wedge\widehat X_k\wedge\widehat Y_j+(-1)^{l+j}[x_l,y_j]\wedge\widehat X_l\wedge\widehat Y_j
=(-1)^{j}\bigl((-1)^k-(-1)^{k}\bigr)[x_k,y_j]\wedge\widehat X_k\wedge\widehat Y_j=0.
\]

\emph{$A$-balancing.} Fix $k<l$, $a\in A$, and compare the tuple $x^{(k)}$ in which $x_k$ is replaced by $ax_k$ with the tuple $x^{(l)}$ in which $x_l$ is replaced by $ax_l$. Terms with $i\notin\{k,l\}$ coincide by $A$-multilinearity of the exterior product. For fixed $j$, the difference of the terms with $i=k$ is, by \eqref{LRleft},
\[
(-1)^{k+j}\bigl([ax_k,y_j]-a[x_k,y_j]\bigr)\wedge\widehat X_k\wedge\widehat Y_j
=-(-1)^{k+j}\rho(y_j)(a)\,x_k\wedge\widehat X_k\wedge\widehat Y_j
=(-1)^{j}\rho(y_j)(a)\,X\wedge\widehat Y_j,
\]
and the difference of the terms with $i=l$ is
\[
-(-1)^{l+j}\bigl([ax_l,y_j]-a[x_l,y_j]\bigr)\wedge\widehat X_l\wedge\widehat Y_j
=(-1)^{l+j}\rho(y_j)(a)\,x_l\wedge\widehat X_l\wedge\widehat Y_j
=-(-1)^{j}\rho(y_j)(a)\,X\wedge\widehat Y_j .
\]
These two contributions cancel, hence $F(x^{(k)};y)=F(x^{(l)};y)$. By \eqref{tupleanti} the same holds in the second group of variables. Lemmas \ref{lem:degree21} and \ref{lem:degree12} are the cases $(p,q)=(2,1)$ and $(1,2)$.

\emph{Formula \eqref{SNfunction}.} It is $K$-multilinear. It is alternating by the same computation as above, since $(-1)^{p-k}+(-1)^{p-l}(-1)^{l-k-1}=0$. It is $A$-balanced because the anchor is $A$-linear: moving $a_0\in A$ from $x_k$ to $x_l$ does not change the terms $i\notin\{k,l\}$, and the terms $i=k$ (resp.\ $i=l$) agree because $\rho(a_0x)(a)=a_0\rho(x)(a)$.
\end{proof}

\begin{remark}
The proof makes the role of the Lie--Rinehart identity transparent. The defects
\[
[x,ay]-a[x,y]=\rho(x)(a)y,
\qquad
[ax,y]-a[x,y]=-\rho(y)(a)x
\]
produce two correction terms which cancel each other in the $A$-balanced exterior algebra.
\end{remark}

\subsection{Antisymmetry and Leibniz rules}
\begin{proposition}[Graded antisymmetry]\label{prop:anti}
For homogeneous $P\in\mathcal G^p$ and $Q\in\mathcal G^q$,
\[
[P,Q]_{SN}=-(-1)^{(p-1)(q-1)}[Q,P]_{SN}.
\]
\end{proposition}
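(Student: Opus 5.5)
The plan is to reduce the statement to decomposable generators and then treat the cases separately according to whether $p$ or $q$ vanishes.

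\emph{Reduction.} By Proposition \ref{prop:balancing} the bracket is a well-defined $K$-bilinear map on $\bigwedge_A G$. Both sides of the identity are $K$-bilinear in $(P,Q)$. Every element of $\mathcal G^p$ is a finite sum of decomposable elements $X=x_1\wedge\cdots\wedge x_p$ for $p\ge1$, or is simply an element $a\in A$ for $p=0$. Note that $a\,x_1\wedge\cdots$ can be absorbed as $(ax_1)\wedge\cdots$. It therefore suffices to verify the identity when $P$ and $Q$ are decomposable or lie in $A$.

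\emph{Case $p,q\ge1$.} This is exactly the tuple-level identity \eqref{tupleanti}, $F(y;x)=-(-1)^{(p-1)(q-1)}F(x;y)$, established in the proof of Proposition \ref{prop:balancing}. I would recheck its sign explicitly, since this is the one delicate point. Writing $[y_j,x_i]=-[x_i,y_j]$ accounts for one minus sign. Then $\widehat Y_j\wedge\widehat X_i=(-1)^{(q-1)(p-1)}\widehat X_i\wedge\widehat Y_j$ by graded commutativity of the wedge product. The prefactor $(-1)^{i+j}$ is symmetric in $i$ and $j$. Summing over $(i,j)$ gives the claim.

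\emph{Case $p=0$ or $q=0$.} If $p=q=0$, both brackets vanish, and the sign is $-(-1)^{1}=1$, so the identity holds. If $P=X$ with $p\ge1$ and $Q=a\in A$, the required sign is $-(-1)^{(p-1)(0-1)}=-(-1)^{p-1}=(-1)^p$. So we need $[X,a]_{SN}=(-1)^p[a,X]_{SN}$. Formula \eqref{SNfunction} states $[a,X]_{SN}=(-1)^p[X,a]_{SN}$, and I would verify this directly from $(-1)^{i}=(-1)^p(-1)^{p-i}$. Multiplying by $(-1)^p$ gives the required identity. The case $P=a$, $Q=X$ is the same relation read in the other direction.

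The main obstacle is purely bookkeeping of signs. In particular, the exponent $(p-1)(q-1)$ must match the Koszul sign from commuting $\widehat X_i$ past $\widehat Y_j$, and the degree-zero convention must be compatible with the sign $(p-1)(-1)$. Beyond this, nothing is needed except the balancing result already proved.
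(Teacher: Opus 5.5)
Your proposal is correct and follows the paper's proof essentially verbatim. The case $p,q\ge1$ is the tuple-level identity \eqref{tupleanti}, the mixed case is the relation $[a,X]_{SN}=(-1)^p[X,a]_{SN}$ built into \eqref{SNfunction}, and the case $p=q=0$ is trivial. Your explicit reduction to decomposables and your rechecks of the signs only spell out what the paper leaves implicit.
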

\begin{proof}
For $p,q\ge1$ this is \eqref{tupleanti}. For $q=0$ and $p\ge1$, the required identity reads $[P,a]_{SN}=(-1)^p[a,P]_{SN}$, which is \eqref{SNfunction}. The case $p=q=0$ is trivial.
\end{proof}

\begin{proposition}[Leibniz rule]\label{prop:leibniz}
For homogeneous $P\in\mathcal G^p$, $Q\in\mathcal G^q$, $R\in\mathcal G^r$,
\begin{equation}\label{Leib2}
[P,Q\wedge R]_{SN}=[P,Q]_{SN}\wedge R+(-1)^{(p-1)q}Q\wedge[P,R]_{SN}.
\end{equation}
Consequently,
\begin{equation}\label{Leib1}
[P\wedge Q,R]_{SN}=P\wedge[Q,R]_{SN}+(-1)^{q(r-1)}[P,R]_{SN}\wedge Q.
\end{equation}
\end{proposition}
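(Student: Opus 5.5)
I plan to prove \eqref{Leib2} directly from the defining formulas on decomposable elements and then obtain \eqref{Leib1} from it by graded antisymmetry. Since all brackets are $K$-bilinear and well defined on $\bigwedge_A G$ by Proposition~\ref{prop:balancing}, it suffices to treat $Q$ and $R$ of the form $a\,y_1\wedge\cdots\wedge y_q$ and $b\,z_1\wedge\cdots\wedge z_r$, and $P$ of the form $X=x_1\wedge\cdots\wedge x_p$ or $P=c\in A$. I also first pull the scalars inside: since $\bigwedge_A$ is $A$-multilinear, $aY=(ay_1)\wedge y_2\wedge\cdots$, so for $q,r\ge1$ I may take $a=b=1$. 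The remaining cases are those where $Q$ or $R$ lies in $A$, or $p=0$.

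\textbf{Main case $p,q,r\ge1$.} Write $W=Y\wedge Z=w_1\wedge\cdots\wedge w_{q+r}$ with $w_j=y_j$ for $j\le q$ and $w_{q+k}=z_k$. By \eqref{SNformula},
\[
[X,W]_{SN}=\sum_i\sum_{j}(-1)^{i+j}[x_i,w_j]\wedge\widehat X_i\wedge\widehat W_j .
\]
I split the sum according to $j\le q$ and $j=q+k$. For $j\le q$ we have $\widehat W_j=\widehat Y_j\wedge Z$, so these terms give exactly $[X,Y]_{SN}\wedge Z$. For $j=q+k$ we have $\widehat W_j=Y\wedge\widehat Z_k$ and the sign is $(-1)^{i+q+k}$. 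I then move $Y$ to the front past $[x_i,z_k]\wedge\widehat X_i$, which has degree $p$; this costs $(-1)^{pq}$. Together with $(-1)^q$, the total sign is $(-1)^{(p-1)q}$, giving $(-1)^{(p-1)q}Y\wedge[X,Z]_{SN}$. This sign bookkeeping is the only real content.

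\textbf{Degenerate cases.} If $Q=a\in A$ and $r\ge1$, I use $a\wedge R=(az_1)\wedge\cdots$. Each term of $[X,(az_1)\wedge\cdots]_{SN}$ with $k=1$ produces, by \eqref{LR}, the term $a[x_i,z_1]$ plus a correction $\rho(x_i)(a)z_1$. The $a$-parts give $a[X,Z]_{SN}$. The corrections are $\sum_i(-1)^{i+1}\rho(x_i)(a)z_1\wedge\widehat X_i\wedge\widehat Z_1$, and after reordering they should equal $[X,a]_{SN}\wedge Z$ with the sign $(-1)^{p-i}$ from \eqref{SNfunction}. Checking this reordering sign is the step I expect to be the main obstacle: the relevant permutation gives $(-1)^{p-1}$ from moving $z_1$ past $\widehat X_i$, and this must match $(-1)^{p-i}$ against $(-1)^{i+1}$.

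The case $R\in A$ then follows from the case $Q\in A$ by graded commutativity of $\wedge$. The case $Q,R\in A$ is the statement that $\rho(x)$ is a derivation, extended through \eqref{SNfunction}. The case $P=c\in A$ uses $[c,X]_{SN}=(-1)^p[X,c]_{SN}$ together with the fact that $[\cdot,c]_{SN}$ acts as contraction by the $A$-linear form $x\mapsto\rho(x)(c)$, which is a graded derivation of degree $-1$. All of these reduce to the same sign check.

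\textbf{Deriving \eqref{Leib1}.} Apply Proposition~\ref{prop:anti} to write $[P\wedge Q,R]_{SN}=-(-1)^{(p+q-1)(r-1)}[R,P\wedge Q]_{SN}$. Then expand by \eqref{Leib2} with $R$ in the first slot. Next swap each bracket back with Proposition~\ref{prop:anti}, and reorder the wedge factors using graded commutativity. A parity computation then yields the coefficients $1$ and $(-1)^{q(r-1)}$.
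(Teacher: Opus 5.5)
Your proposal is correct and follows essentially the same route as the paper. It uses the same split of $[X,Y\wedge Z]_{SN}$ into $y$- and $z$-terms with sign $(-1)^{q+pq}=(-1)^{(p-1)q}$, the same absorption of $b$ into $z_1$ with the Lie--Rinehart correction (the sign you flagged closes up, since $(-1)^{p-1}(-1)^{i+1}=(-1)^{p-i}$), the same reduction of $R\in A$ to $Q\in A$, and \eqref{Leib1} via antisymmetry; the only cosmetic difference is the case $P=c\in A$, which the paper reads off directly from \eqref{SNfunction}, whereas you phrase it via contraction by $x\mapsto\rho(x)(c)$ — note that it is $[c,\cdot]_{SN}$, not $[\cdot,c]_{SN}$, that is a left graded derivation of degree $-1$, though your use of $[c,X]_{SN}=(-1)^p[X,c]_{SN}$ already accounts for this.
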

\begin{proof}
By bilinearity we may assume $P,Q,R$ decomposable.

\emph{Case $p,q,r\ge1$.} Write $Q\wedge R=y_1\wedge\cdots\wedge y_q\wedge z_1\wedge\cdots\wedge z_r$. In \eqref{SNformula}, the terms in which $x_i$ is bracketed with some $y_j$ give $[P,Q]_{SN}\wedge R$. The terms in which $x_i$ is bracketed with $z_k$ (in position $q+k$) are
\[
(-1)^{i+q+k}[x_i,z_k]\wedge\widehat X_i\wedge Q\wedge\widehat Z_k
=(-1)^{q+pq}\,Q\wedge\bigl((-1)^{i+k}[x_i,z_k]\wedge\widehat X_i\wedge\widehat Z_k\bigr),
\]
and $(-1)^{q+pq}=(-1)^{(p-1)q}$.

\emph{Case $p\ge1$, $Q=b\in A$, $r\ge1$.} Write $bR=(bz_1)\wedge z_2\wedge\cdots\wedge z_r$. In \eqref{SNformula}, the terms with $k\ge2$ equal $b$ times the corresponding terms of $[P,R]_{SN}$. For $k=1$, \eqref{LR} gives $[x_i,bz_1]=b[x_i,z_1]+\rho(x_i)(b)z_1$. Hence
\[
[P,bR]_{SN}=b[P,R]_{SN}+\sum_{i=1}^p(-1)^{i+1}\rho(x_i)(b)\,z_1\wedge\widehat X_i\wedge\widehat Z_1 .
\]
Moving $z_1$ past the $p-1$ factors of $\widehat X_i$, the sum becomes $\sum_i(-1)^{p-i}\rho(x_i)(b)\widehat X_i\wedge R=[P,b]_{SN}\wedge R$ by \eqref{SNfunction}. This is \eqref{Leib2}.

\emph{Case $p\ge1$, $R=b\in A$.} Then $Q\wedge b=bQ$ and, by the previous case, $[P,bQ]_{SN}=b[P,Q]_{SN}+[P,b]_{SN}\wedge Q$. Since $[P,b]_{SN}$ has degree $p-1$, $[P,b]_{SN}\wedge Q=(-1)^{(p-1)q}Q\wedge[P,b]_{SN}$, which is \eqref{Leib2}. If $Q,R\in A$, \eqref{Leib2} states that each $\rho(x_i)$ is a derivation of $A$.

\emph{Case $P=a\in A$.} If $q,r\ge1$, formula \eqref{SNfunction} gives directly
\[
[a,Q\wedge R]_{SN}=\sum_{j}(-1)^j\rho(y_j)(a)\widehat Y_j\wedge R+\sum_k(-1)^{q+k}\rho(z_k)(a)Q\wedge\widehat Z_k
=[a,Q]_{SN}\wedge R+(-1)^{q}Q\wedge[a,R]_{SN},
\]
and $(-1)^q=(-1)^{(p-1)q}$ for $p=0$. If $Q=b\in A$, then $[a,bR]_{SN}=b[a,R]_{SN}$ by $A$-linearity of $\rho$, which is \eqref{Leib2} since $[a,b]_{SN}=0$. The case $Q,R\in A$ is trivial.

Finally, \eqref{Leib1} follows from \eqref{Leib2} by applying Proposition \ref{prop:anti} three times.
\end{proof}

\begin{corollary}\label{cor:xder}
For $x\in G$,
\[
[x,x_1\wedge\cdots\wedge x_p]_{SN}
=\sum_{i=1}^p x_1\wedge\cdots\wedge[x,x_i]\wedge\cdots\wedge x_p.
\]
In particular, no alternating factor $(-1)^i$ occurs in this formula.
\end{corollary}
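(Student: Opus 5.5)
Two routes are available, and I would use both as a cross-check. The first is a direct specialization of \eqref{SNformula} with $p=1$. Here $X=x$, so $i=1$ and $\widehat X_1=1$ (the empty product). The formula then reads
\[
[x,x_1\wedge\cdots\wedge x_p]_{SN}=\sum_{j=1}^p(-1)^{1+j}[x,x_j]\wedge\widehat{X}_j ,
\]
where now $\widehat X_j$ denotes $x_1\wedge\cdots\widehat{x_j}\cdots\wedge x_p$. Because $[x,x_j]\in G$ has degree one, moving it from the front into the $j$-th slot passes over $j-1$ factors and contributes $(-1)^{j-1}$. That is,
\[
[x,x_j]\wedge\widehat X_j=(-1)^{j-1}x_1\wedge\cdots\wedge[x,x_j]\wedge\cdots\wedge x_p .
\]
The product of signs is $(-1)^{1+j}(-1)^{j-1}=1$, which yields the stated formula with no alternating signs.

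The second route, which I would present as the main argument because it explains the absence of signs conceptually, is an induction on $p$ using the Leibniz rule \eqref{Leib2} of Proposition \ref{prop:leibniz} with $P=x$, so that $p-1=0$. The sign $(-1)^{(p-1)q}$ then equals $1$, so $[x,\cdot]_{SN}$ is an ordinary, even derivation of degree $0$. The base case $p=1$ is $[x,x_1]_{SN}=[x,x_1]$, which is \eqref{gens}. For the inductive step I write $x_1\wedge\cdots\wedge x_p=(x_1\wedge\cdots\wedge x_{p-1})\wedge x_p$ and apply \eqref{Leib2}:
\[
[x,Q\wedge x_p]_{SN}=[x,Q]_{SN}\wedge x_p+Q\wedge[x,x_p].
\]
The induction hypothesis applied to $[x,Q]_{SN}$ then finishes the step.

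The only point requiring care is the bookkeeping convention for $\widehat X_j$ and the sign from transposing a degree-one element, and this is routine. The well-definedness of the right-hand side on $\bigwedge_A G$ needs no separate check, since the left-hand side is well defined by Proposition \ref{prop:balancing}. The closing remark of the statement, that no factor $(-1)^i$ appears, is exactly the observation that $x$ has degree $1$, so the derivation $[x,\cdot]_{SN}$ has degree $0$.
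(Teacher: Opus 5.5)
Your proposal is correct and matches the paper's approach: the paper states this as an immediate corollary of the Leibniz rule \eqref{Leib2} with $P=x$ of degree $1$, so that $(-1)^{(1-1)q}=1$ and $[x,\cdot]_{SN}$ is an even derivation, and your induction on the number of factors is exactly this argument. Your direct check from \eqref{SNformula}, where the sign works out as $(-1)^{1+j}(-1)^{j-1}=1$, is also valid and is a good independent confirmation.
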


\begin{corollary}\label{cor:ader}
For $a\in A$,
\[
[a,x_1\wedge\cdots\wedge x_p]_{SN}
=\sum_{i=1}^p(-1)^i\rho(x_i)(a)\,
 x_1\wedge\cdots\widehat{x_i}\cdots\wedge x_p .
\]
\end{corollary}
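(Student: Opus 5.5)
This is Corollary \ref{cor:ader}, and the plan is to read it off from \eqref{SNfunction}, checking it against the Leibniz rule of Proposition \ref{prop:leibniz} to make sure the signs are consistent.

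\emph{Direct route.} For $p\ge1$ and $X=x_1\wedge\cdots\wedge x_p$, the second formula in \eqref{SNfunction} is
\[
[a,X]_{SN}=\sum_{i=1}^p(-1)^i\rho(x_i)(a)\,\widehat X_i,
\]
and $\widehat X_i=x_1\wedge\cdots\widehat{x_i}\cdots\wedge x_p$ by definition. This is exactly the claim. Proposition \ref{prop:balancing} guarantees that the right-hand side depends only on the class of $X$ in $\bigwedge_A^pG$, so the formula holds on $\mathcal G^p$ and not merely on tuples.

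\emph{Consistency check.} Independently, I would induct on $p$ using \eqref{Leib2} with $P=a$, which has degree $0$ and so gives the sign $(-1)^{(0-1)q}=(-1)^q$.
\begin{itemize}[leftmargin=2em]
\item For $p=1$, \eqref{gens} gives $[a,x_1]_{SN}=-\rho(x_1)(a)$, which agrees with the formula.
\item For the inductive step, write $X=(x_1\wedge\cdots\wedge x_{p-1})\wedge x_p$. Then
\[
[a,X]_{SN}=[a,x_1\wedge\cdots\wedge x_{p-1}]_{SN}\wedge x_p+(-1)^{p-1}x_1\wedge\cdots\wedge x_{p-1}\,[a,x_p]_{SN}.
\]
\item By the induction hypothesis, the first term produces the summands with $i\le p-1$, carrying the signs $(-1)^i$.
\item The second term equals $(-1)^{p-1}(-1)\rho(x_p)(a)\widehat X_p=(-1)^p\rho(x_p)(a)\widehat X_p$, which is the summand with $i=p$.
\end{itemize}

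The only point needing care is the sign bookkeeping in the last bullet, where the degree-$0$ factor $[a,x_p]_{SN}\in A$ passes through the wedge product. Since it is a scalar in $A$, no additional sign arises, and the argument is complete.
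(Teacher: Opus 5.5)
Your proposal is correct and matches the paper: the corollary is the second formula of \eqref{SNfunction} restated, and Proposition \ref{prop:balancing} is what makes it valid on $\bigwedge_A^pG$ rather than only on tuples. Your inductive sign check via \eqref{Leib2} is also correct, though it is not truly independent, since the case $P=a$ of the Leibniz rule was itself proved directly from \eqref{SNfunction}.
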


\section{The graded Jacobi identity}
By Proposition \ref{prop:leibniz}, for homogeneous $P\in\mathcal G^p$ the operator
\[
\adSN(P):\mathcal G\longrightarrow\mathcal G,\qquad \adSN(P)(Q)=[P,Q]_{SN},
\]
is a graded derivation of degree $p-1$. We stress that this fact relies only on the Leibniz rule; no form of the Jacobi identity is involved.

For homogeneous $P,Q,R$, define the Jacobiator
\begin{align*}
J(P,Q,R)={}&(-1)^{(p-1)(r-1)}[P,[Q,R]_{SN}]_{SN}
+(-1)^{(q-1)(p-1)}[Q,[R,P]_{SN}]_{SN}\\
&+(-1)^{(r-1)(q-1)}[R,[P,Q]_{SN}]_{SN},
\end{align*}
and its Leibniz form
\[
J'(P,Q,R)=[P,[Q,R]_{SN}]_{SN}-[[P,Q]_{SN},R]_{SN}-(-1)^{(p-1)(q-1)}[Q,[P,R]_{SN}]_{SN}.
\]

\begin{lemma}\label{lem:Jforms}
For homogeneous $P,Q,R$:
\begin{enumerate}[label=\textup{(\roman*)}]
\item $J(P,Q,R)=J(Q,R,P)$;
\item $J'(P,Q,R)=(-1)^{(p-1)(r-1)}J(P,Q,R)$;
\item $J'(P,Q,\cdot)=[\adSN(P),\adSN(Q)]-\adSN([P,Q]_{SN})$ is a graded derivation of degree $p+q-2$.
\end{enumerate}
\end{lemma}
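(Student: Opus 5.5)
The three items are purely formal consequences of graded antisymmetry (Proposition \ref{prop:anti}) and the Leibniz rule (Proposition \ref{prop:leibniz}). No Jacobi-type input is used. All brackets below are $[\cdot,\cdot]_{SN}$.

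\emph{Item (i).} Cyclically permuting $(P,Q,R)\mapsto(Q,R,P)$ permutes the three summands of $J$. The sign attached to $[P,[Q,R]]$ is $(-1)^{(p-1)(r-1)}$, with exponent the product of the shifted degrees of the outer entry and the innermost last entry. In $J(Q,R,P)$ the first summand is $(-1)^{(q-1)(p-1)}[Q,[R,P]]$, which is exactly the second summand of $J(P,Q,R)$. Likewise the remaining two summands match. This step is just a matter of reading off exponents.

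\emph{Item (ii).} I will rewrite each term of $J'$ as a term of $J$.
\begin{itemize}[label=--]
\item The first term $[P,[Q,R]]$ already appears in $J$ with coefficient $(-1)^{(p-1)(r-1)}$.
\item For the second term, $[P,Q]$ has degree $p+q-1$. By antisymmetry, $-[[P,Q],R]=(-1)^{(p+q-2)(r-1)}[R,[P,Q]]=(-1)^{(p+q)(r-1)}[R,[P,Q]]$.
\item For the third term, first use $[P,R]=-(-1)^{(p-1)(r-1)}[R,P]$, so that $-(-1)^{(p-1)(q-1)}[Q,[P,R]]=(-1)^{(p-1)(q-1)+(p-1)(r-1)}[Q,[R,P]]$.
\end{itemize}
Multiplying $J'$ by $(-1)^{(p-1)(r-1)}$ then gives coefficients $1$, $(-1)^{(p-1)(q-1)}$ and $(-1)^{(p+q)(r-1)+(p-1)(r-1)}=(-1)^{(q-1)(r-1)}$. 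These are exactly the coefficients in $J$, since $(p+q)+(p-1)\equiv q-1 \pmod 2$. The only care needed is with the parities, and I will verify each exponent modulo $2$ explicitly.

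\emph{Item (iii).} By Proposition \ref{prop:leibniz}, $\adSN(P)$ and $\adSN(Q)$ are graded derivations of degrees $p-1$ and $q-1$.
\begin{itemize}[label=--]
\item Their graded commutator $\adSN(P)\circ\adSN(Q)-(-1)^{(p-1)(q-1)}\adSN(Q)\circ\adSN(P)$ is therefore a graded derivation of degree $p+q-2$, by the general fact recalled in the definition of $\Der_K(\mathcal G)$.
\item $\adSN([P,Q])$ is a graded derivation of degree $(p+q-1)-1=p+q-2$, again by Proposition \ref{prop:leibniz}.
\item Evaluating $[\adSN(P),\adSN(Q)]-\adSN([P,Q])$ on $R$ gives literally $[P,[Q,R]]-(-1)^{(p-1)(q-1)}[Q,[P,R]]-[[P,Q],R]=J'(P,Q,R)$.
\end{itemize}
Since a difference of two derivations of the same degree is again a derivation of that degree, item (iii) follows.

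The only potential obstacle is the parity bookkeeping in (ii). The commutator of derivations being a derivation is standard, and I would include its short one-line check for completeness.
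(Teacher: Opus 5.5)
Your proposal is correct and follows the paper's proof: (i) by reading off the cyclic structure, (ii) by applying graded antisymmetry to $[[P,Q]_{SN},R]_{SN}$ and to the inner bracket $[P,R]_{SN}$ (the paper runs the same sign computation from $J$ to $J'$), and (iii) from the definition of the graded commutator together with the Leibniz rule. One small slip in (ii): after multiplying $J'$ by $(-1)^{(p-1)(r-1)}$, the coefficient of $[P,[Q,R]_{SN}]_{SN}$ is $(-1)^{(p-1)(r-1)}$, not $1$, and this is exactly the first coefficient of $J$, so your conclusion stands.
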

\begin{proof}
(i) is immediate from the definition. For (ii), multiply $J(P,Q,R)$ by $(-1)^{(p-1)(r-1)}$ and use Proposition \ref{prop:anti} in the form $[R,P]_{SN}=-(-1)^{(r-1)(p-1)}[P,R]_{SN}$ and $[R,[P,Q]_{SN}]_{SN}=-(-1)^{(r-1)(p+q-2)}[[P,Q]_{SN},R]_{SN}$; the signs combine to those of $J'$. For (iii), the identity is the definition of the graded commutator; $[\adSN(P),\adSN(Q)]$ and $\adSN([P,Q]_{SN})$ are graded derivations of degree $p+q-2$, hence so is their difference.
\end{proof}

\begin{lemma}\label{lem:JacGen}
The Jacobiator vanishes on all triples of elements of $A\cup G$.
\end{lemma}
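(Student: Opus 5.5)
We will check the vanishing of $J(P,Q,R)$ for $P,Q,R\in A\cup G$ by splitting into cases according to how many of the three arguments lie in $A$. Lemma \ref{lem:Jforms}(i) gives cyclic invariance of $J$. Hence, for each count of scalar arguments, it suffices to treat one cyclic arrangement together with its mirror image. In fact it is easier to work with $J'$, since by Lemma \ref{lem:Jforms}(ii) it differs from $J$ only by a sign. We then use only the low-degree values \eqref{gens}: $[a,b]_{SN}=0$, $[x,a]_{SN}=\rho(x)(a)=-[a,x]_{SN}$, and $[x,y]_{SN}=[x,y]$.

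\emph{Three elements of $G$.} Here $p=q=r=1$, so all signs are $+1$ and
\[
J(x,y,z)=[x,[y,z]]+[y,[z,x]]+[z,[x,y]],
\]
which vanishes by the Jacobi identity of the $K$-Lie algebra $G$.

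\emph{Two elements of $G$ and one of $A$.} By cyclicity we may take the triple $(x,y,a)$. We compute
\[
J'(x,y,a)=[x,[y,a]]-[[x,y],a]-[y,[x,a]]=\rho(x)\rho(y)(a)-\rho([x,y])(a)-\rho(y)\rho(x)(a).
\]
This uses $[x,b]_{SN}=\rho(x)(b)$ with $b=\rho(y)(a)\in A$, and the sign $(-1)^{(p-1)(q-1)}=1$. The expression vanishes precisely because $\rho$ is a Lie algebra morphism. Every other arrangement is a cyclic permutation of this one, so it is covered by Lemma \ref{lem:Jforms}(i).

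\emph{One element of $G$ and two of $A$.} We take the triple $(x,a,b)$ up to cyclic order. Every term of $J$ involves either $[a,b]_{SN}=0$ or a bracket of an element of $A$ with an element of $A$. For instance, $[x,a]_{SN}$ lies in $A$, and bracketing it with $b$ gives $0$; likewise $[a,[b,x]]=[a,-\rho(x)(b)]=0$. So each of the three terms vanishes separately.

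\emph{Three elements of $A$.} All brackets vanish, so $J=0$.

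The only substantive inputs are therefore the Jacobi identity in $G$ and the morphism property of $\rho$. The main point requiring care is the bookkeeping of the degree signs when some arguments have degree $0$: the exponents $(p-1)(q-1)$ become $\pm1$ parities. I would handle this by computing $J'$ in the single representative ordering of each case and transferring the result to the other orderings via Lemma \ref{lem:Jforms}(i),(ii), rather than expanding $J$ directly.
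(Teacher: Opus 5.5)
Your proof is correct and follows the paper's argument essentially step for step. You use the same split by the number of arguments in $A$, the Jacobi identity of $G$ for three elements of $G$, the morphism property of $\rho$ for $(x,y,a)$ extended to the other arrangements by cyclic invariance (Lemma~\ref{lem:Jforms}(i)), and termwise vanishing when at least two arguments lie in $A$. The only cosmetic difference is that you expand $J'(x,y,a)$ and convert it via Lemma~\ref{lem:Jforms}(ii), where the sign $(-1)^{(p-1)(r-1)}$ equals $1$; the paper expands $J(x,y,a)$ directly and obtains the same three terms.
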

\begin{proof}
For $x,y,z\in G$, this is the Jacobi identity in $G$. For $x,y\in G$ and $a\in A$,
\[
J(x,y,a)=\rho(x)\rho(y)(a)-\rho(y)\rho(x)(a)-\rho([x,y])(a)=0
\]
because $\rho$ is a Lie algebra morphism; by Lemma \ref{lem:Jforms}(i) this covers $J(y,a,x)$ and $J(a,x,y)$, hence all triples with two entries in $G$ and one in $A$. If at least two entries belong to $A$, every term vanishes: each term contains either a bracket of two elements of $A$, or a bracket of an element of $A$ with an element of $[G,A]_{SN}\subseteq A$.
\end{proof}

\begin{theorem}[Graded Jacobi identity]\label{thm:Jacobi}
For all homogeneous $P,Q,R\in\mathcal G$, $J(P,Q,R)=0$.
\end{theorem}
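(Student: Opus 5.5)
The plan is to reduce the statement to the generators $A\cup G$, using the derivation property of the Leibniz form of the Jacobiator. By Lemma \ref{lem:Jforms}(ii), $J$ and $J'$ differ only by a sign, so it suffices to prove $J'(P,Q,R)=0$. By Lemma \ref{lem:Jforms}(iii), for fixed homogeneous $P,Q$ the map $R\mapsto J'(P,Q,R)$ is a graded derivation of $\mathcal G$. Lemma \ref{lem:dergens} then says it vanishes identically once it vanishes on $A$ and $G$. Thus the statement reduces to showing $J(P,Q,R)=0$ whenever $R\in A\cup G$ and $P,Q$ are arbitrary homogeneous elements.

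Next I would move the arbitrary arguments into the third slot using the cyclic symmetry of Lemma \ref{lem:Jforms}(i), and repeat the reduction. Fix $R=r\in A\cup G$. By Lemma \ref{lem:Jforms}(i), $J(P,Q,r)=J(r,P,Q)$. This equals, up to sign, $J'(r,P,Q)$, which is a graded derivation in $Q$. Applying Lemma \ref{lem:dergens} again, it suffices to treat $Q=s\in A\cup G$.

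It remains to handle $J(P,s,r)$ with $r,s\in A\cup G$ and $P$ arbitrary. Cycling once more gives $J(P,s,r)=J(s,r,P)$, which equals $\pm J'(s,r,P)$. This is a graded derivation in $P$, so it vanishes as soon as it vanishes for $P\in A\cup G$. That final case is exactly Lemma \ref{lem:JacGen}.

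Unwinding the three reductions in reverse order gives $J\equiv0$ on all homogeneous triples, and $K$-linearity extends this to everything. The one point needing care is the bookkeeping in each application of Lemma \ref{lem:Jforms}(iii). Each reduction requires that the first two arguments be homogeneous and fixed while the third varies over all of $\mathcal G$. The derivation in question has degree $p+q-2$, which depends only on the fixed arguments, so it is a genuine graded derivation on all of $\mathcal G$. No circularity arises, since Lemma \ref{lem:Jforms}(iii) uses only antisymmetry and the Leibniz rule. I expect no real obstacle beyond keeping the sign factors in (ii) consistent; they are nonzero scalars and so do not affect vanishing.
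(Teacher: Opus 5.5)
Your proposal is correct and is essentially the paper's own proof. The paper likewise makes three successive reductions: it uses the fact that $J'$ is a derivation in its last slot (Lemma~\ref{lem:Jforms}(iii) together with Lemma~\ref{lem:dergens}), rotates each remaining arbitrary argument into that slot via the cyclic symmetry of Lemma~\ref{lem:Jforms}(i), and ends with the generator case, Lemma~\ref{lem:JacGen}.
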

\begin{proof}
By Lemma \ref{lem:Jforms}, up to signs, $J(P,Q,R)$ equals $J'(P,Q,R)$, $J'(Q,R,P)$ and $J'(R,P,Q)$, and each of these is a graded derivation in its last argument. Using Lemma \ref{lem:dergens} three times:
\begin{enumerate}[label=(\alph*)]
\item $J(P,Q,\cdot)=0$ as soon as $J(P,Q,z)=0$ for all $z\in A\cup G$;
\item $J(P,Q,z)=\pm J'(z,P,Q)$ vanishes for all $Q$ as soon as $J(P,w,z)=0$ for all $w\in A\cup G$;
\item $J(P,w,z)=\pm J'(w,z,P)$ vanishes for all $P$ as soon as $J(u,w,z)=0$ for all $u\in A\cup G$.
\end{enumerate}
The last condition holds by Lemma \ref{lem:JacGen}.
\end{proof}

\begin{theorem}\label{thm:Gerstenhaber}
The triple
\[
\left(\bigwedge_A G,\wedge,[\cdot,\cdot]_{SN}\right)
\]
is a Gerstenhaber algebra.
\end{theorem}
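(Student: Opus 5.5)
The plan is to check the axioms of a Gerstenhaber algebra one by one against the results already proved, since each has essentially been established.

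First, the underlying algebra. $(\bigwedge_A G,\wedge)$ is graded commutative and associative, with $\mathcal G^p\wedge\mathcal G^q\subseteq\mathcal G^{p+q}$; this is standard for the exterior algebra of an $A$-module, viewed as a $K$-algebra through $K\to A$. Second, the bracket. Proposition \ref{prop:balancing} shows that $[\cdot,\cdot]_{SN}$ is a well-defined $K$-bilinear map on $\bigwedge_A G$. Inspecting \eqref{SNformula} and \eqref{SNfunction}, the bracket sends $\mathcal G^p\times\mathcal G^q$ into $\mathcal G^{p+q-1}$:
\begin{itemize}[label=--]
\item for $p,q\ge1$, each term $[x_i,y_j]\wedge\widehat X_i\wedge\widehat Y_j$ has $1+(p-1)+(q-1)$ factors;
\item for $q=0$, the result lies in $\mathcal G^{p-1}$;
\item for $p=q=0$, the result is $0$.
\end{itemize}
So the bracket has degree $-1$.

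Next, the three identities. Axiom \eqref{GAanti} is Proposition \ref{prop:anti}. The Leibniz rule \eqref{GAleib} is exactly \eqref{Leib2} in Proposition \ref{prop:leibniz}. The graded Jacobi identity \eqref{GAjac} is $J(P,Q,R)=0$, which is Theorem \ref{thm:Jacobi}. All three identities extend from homogeneous elements to arbitrary ones by bilinearity.

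The only point needing care is that the proof of Theorem \ref{thm:Jacobi} used Lemma \ref{lem:dergens} in its last argument. That step requires $J'(P,Q,\cdot)$ to be a genuine graded derivation on all of $\mathcal G$, which Lemma \ref{lem:Jforms}(iii) provides. It also requires the reduction to generators $A\cup G$ to be legitimate even for the $A$-module structure. This holds because $A$ itself lies among the generators and $a\,x_1\wedge\cdots\wedge x_p=a\wedge x_1\wedge\cdots\wedge x_p$. I expect no genuine obstacle: the theorem is an assembly of the previous propositions. The mild subtlety is the sign bookkeeping between $J$ and the form \eqref{GAjac}, but the two coincide verbatim by definition.
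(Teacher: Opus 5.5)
Your proposal is correct and follows the paper's proof exactly. The paper likewise just assembles the graded commutativity and associativity of $\wedge$, the degree $-1$ property of the bracket, Proposition~\ref{prop:anti}, the Leibniz rule \eqref{Leib2} of Proposition~\ref{prop:leibniz}, and Theorem~\ref{thm:Jacobi}. Your explicit degree count is a harmless elaboration, as is your re-check of the reduction to generators in the Jacobi proof. The case $p=0$, $q\ge1$ you omitted from the degree count follows the same way from \eqref{SNfunction}.
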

\begin{proof}
The exterior product is graded commutative and associative, and the bracket has degree $-1$. Proposition \ref{prop:anti}, Proposition \ref{prop:leibniz}, and Theorem \ref{thm:Jacobi} give respectively graded antisymmetry, the graded Leibniz rule, and the graded Jacobi identity.
\end{proof}

\section{Inner graded derivations}
The graded derivations $\adSN(P)$, $P\in\mathcal G$, are called the \emph{inner} (or \emph{adjoint}) graded derivations of the Gerstenhaber algebra $\mathcal G$.

\begin{proposition}
For $a,b\in A$ and $x,y\in G$,
\[
\adSN(a)(b)=0,\quad \adSN(a)(x)=-\rho(x)(a),\quad
\adSN(x)(a)=\rho(x)(a),\quad \adSN(x)(y)=[x,y].
\]
By Lemma \ref{lem:dergens}, these values determine $\adSN(a)$ and $\adSN(x)$; their actions on decomposable multivectors are given by Corollaries \ref{cor:ader} and \ref{cor:xder}.
\end{proposition}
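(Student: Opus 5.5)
The four values are read off from \eqref{gens} and the definition $\adSN(P)(Q)=[P,Q]_{SN}$. For $a,b\in A$ we have $[a,b]_{SN}=0$ by convention. For $p=1$, formula \eqref{SNfunction} gives $[x,a]_{SN}=(-1)^{0}\rho(x)(a)=\rho(x)(a)$ and $[a,x]_{SN}=(-1)^{1}\rho(x)(a)=-\rho(x)(a)$. For $p=q=1$, formula \eqref{SNformula} reduces to $[x,y]_{SN}=(-1)^{2}[x,y]=[x,y]$. As a consistency check, $[a,x]_{SN}=-\rho(x)(a)$ also follows from $[x,a]_{SN}$ by Proposition \ref{prop:anti} with $(p,q)=(0,1)$, since $-(-1)^{(0-1)(1-1)}=-1$.

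For the second assertion, we note first that $\adSN(a)$ and $\adSN(x)$ are graded derivations, of degrees $-1$ and $0$ respectively. This comes from the Leibniz rule \eqref{Leib2} of Proposition \ref{prop:leibniz}, as observed at the start of Section 4. Lemma \ref{lem:dergens} then shows that each of them is determined by its restriction to $A\cup G$, and these restrictions are exactly the four values just computed.

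To obtain the explicit formulas, we apply \eqref{Leib2} inductively to $x_1\wedge(x_2\wedge\cdots\wedge x_p)$. For $P=x$, the sign $(-1)^{(p-1)q}$ with $p=1$ equals $1$. The induction therefore yields $\sum_i x_1\wedge\cdots\wedge[x,x_i]\wedge\cdots\wedge x_p$, which is Corollary \ref{cor:xder}. For $P=a$, the sign is $(-1)^{q}$, where $q$ is the number of factors already passed. Each $[a,x_i]_{SN}=-\rho(x_i)(a)$ is a scalar that moves freely, and the accumulated sign is $(-1)^{i-1}\cdot(-1)=(-1)^i$. This reproduces Corollary \ref{cor:ader}, which can also be read directly from \eqref{SNfunction}.

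There is no real obstacle; the only point requiring attention is the sign bookkeeping. One must keep the convention $(-1)^{p-i}$ of \eqref{SNfunction} for $[X,a]_{SN}$ separate from the convention $(-1)^i$ used for $[a,X]_{SN}$.
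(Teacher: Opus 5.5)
Your proposal is correct and matches the paper's reasoning. You read the four values off \eqref{gens}, obtain the derivation property of $\adSN(a)$ and $\adSN(x)$ from the Leibniz rule \eqref{Leib2}, and invoke Lemma \ref{lem:dergens} for uniqueness; your inductive re-derivation of Corollaries \ref{cor:xder} and \ref{cor:ader}, including the sign $(-1)^{i-1}\cdot(-1)=(-1)^i$, is accurate.
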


\begin{theorem}[Adjoint commutator identity]\label{thm:ad}
For homogeneous $P\in\mathcal G^p$ and $Q\in\mathcal G^q$,
\[
[\adSN(P),\adSN(Q)]=\adSN([P,Q]_{SN}),
\]
where the bracket on the left is the graded commutator.
\end{theorem}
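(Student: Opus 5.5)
The identity follows directly from the graded Jacobi identity (Theorem \ref{thm:Jacobi}) via the Leibniz form of the Jacobiator. Both sides are $K$-linear maps $\mathcal G\to\mathcal G$. By Lemma \ref{lem:Jforms}(iii), their difference, evaluated at a homogeneous $R\in\mathcal G^r$, is
\[
\bigl([\adSN(P),\adSN(Q)]-\adSN([P,Q]_{SN})\bigr)(R)=J'(P,Q,R).
\]
The first step is to check this expansion explicitly, since it fixes the signs. The graded commutator of derivations of degrees $p-1$ and $q-1$ gives
\[
[P,[Q,R]_{SN}]_{SN}-(-1)^{(p-1)(q-1)}[Q,[P,R]_{SN}]_{SN}.
\]
Subtracting $\adSN([P,Q]_{SN})(R)=[[P,Q]_{SN},R]_{SN}$ then yields exactly the three terms defining $J'(P,Q,R)$.

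The second step invokes Lemma \ref{lem:Jforms}(ii):
\[
J'(P,Q,R)=(-1)^{(p-1)(r-1)}J(P,Q,R),
\]
and Theorem \ref{thm:Jacobi} gives $J(P,Q,R)=0$. Hence the difference vanishes on every homogeneous $R$. Since $\mathcal G=\bigoplus_r\mathcal G^r$ and both operators are $K$-linear, they agree on all of $\mathcal G$. Equivalently, by Lemma \ref{lem:dergens} it would suffice to check $R\in A\cup G$, but the direct argument already covers every homogeneous $R$.

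The only potential obstacle is the sign verification in Lemma \ref{lem:Jforms}(ii), which the earlier proof stated somewhat briefly. I would recheck it by multiplying $J$ by $(-1)^{(p-1)(r-1)}$, which makes the first term $[P,[Q,R]_{SN}]_{SN}$. For the second term, the sign becomes
\[
(-1)^{(p-1)(r-1)+(q-1)(p-1)}\cdot\bigl(-(-1)^{(r-1)(p-1)}\bigr)=-(-1)^{(p-1)(q-1)},
\]
using $[R,P]_{SN}=-(-1)^{(r-1)(p-1)}[P,R]_{SN}$; this matches $J'$. For the third term, the sign becomes
\[
(-1)^{(p-1)(r-1)+(r-1)(q-1)}\cdot\bigl(-(-1)^{(r-1)(p+q-2)}\bigr)=-1,
\]
using $[R,[P,Q]_{SN}]_{SN}=-(-1)^{(r-1)(p+q-2)}[[P,Q]_{SN},R]_{SN}$, because $[P,Q]_{SN}$ has degree $p+q-1$. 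This also matches $J'$.

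Finally, I would note the interpretation. The identity says that $P\mapsto\adSN(P)$ is a morphism of graded Lie algebras from the shifted algebra $\mathcal G[1]$ into $\Der_K(\mathcal G)$, with $\adSN(P)$ of degree $p-1$ by Proposition \ref{prop:leibniz}. It is therefore the adjoint representation, derived here from Jacobi rather than used to prove it.
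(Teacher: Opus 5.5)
Your proof is correct and follows the paper's argument: the difference of the two sides is $J'(P,Q,\cdot)$ by Lemma \ref{lem:Jforms}(iii), which equals $(-1)^{(p-1)(r-1)}J(P,Q,\cdot)$ by Lemma \ref{lem:Jforms}(ii), and this vanishes by Theorem \ref{thm:Jacobi}. Your explicit recheck of the signs in (ii), including the degree $p+q-1$ of $[P,Q]_{SN}$ in the third term, is accurate and fills in the detail the paper states only briefly.
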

\begin{proof}
By Lemma \ref{lem:Jforms}(iii), $[\adSN(P),\adSN(Q)]-\adSN([P,Q]_{SN})=J'(P,Q,\cdot)$, which vanishes by Lemma \ref{lem:Jforms}(ii) and Theorem \ref{thm:Jacobi}.
\end{proof}

\begin{corollary}
The map
\[
\adSN:\mathcal G[1]\longrightarrow\Der_K(\mathcal G),
\]
where $\mathcal G[1]^k=\mathcal G^{k+1}$, is a morphism of graded Lie algebras.
\end{corollary}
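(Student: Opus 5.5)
The corollary is a repackaging of Theorem~\ref{thm:ad} together with the fact that the bracket makes $\mathcal G[1]$ a graded Lie algebra. The plan has four steps.

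\emph{Step 1: the grading on $\mathcal G[1]$.} An element $P\in\mathcal G^p$ has degree $p-1$ in $\mathcal G[1]$. The bracket $[\cdot,\cdot]_{SN}$ sends $\mathcal G^p\times\mathcal G^q$ into $\mathcal G^{p+q-1}$, so it sends $\mathcal G[1]^{p-1}\times\mathcal G[1]^{q-1}$ into $\mathcal G[1]^{(p-1)+(q-1)}$. It therefore has degree $0$ on $\mathcal G[1]$.

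\emph{Step 2: $\mathcal G[1]$ is a graded Lie algebra.} Proposition~\ref{prop:anti} becomes ordinary graded antisymmetry with respect to the shifted degrees $p-1$, $q-1$. Theorem~\ref{thm:Jacobi}, in its Leibniz form, becomes the graded Jacobi identity for the shifted degrees. To see the latter, note that $J'(P,Q,R)=0$ by Lemma~\ref{lem:Jforms}(ii) and Theorem~\ref{thm:Jacobi}. This vanishing reads
\[
[P,[Q,R]_{SN}]_{SN}=[[P,Q]_{SN},R]_{SN}+(-1)^{(p-1)(q-1)}[Q,[P,R]_{SN}]_{SN},
\]
which is exactly the derivation form of graded Jacobi in degrees $p-1,q-1$.

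\emph{Step 3: $\adSN$ is well defined and of degree $0$.} By Proposition~\ref{prop:leibniz}, $\adSN(P)$ is a graded derivation of degree $p-1$. This degree equals the degree of $P$ in $\mathcal G[1]$. Since the bracket is $K$-bilinear, $\adSN$ is $K$-linear, so it is a degree-preserving linear map $\mathcal G[1]\to\Der_K(\mathcal G)$.

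\emph{Step 4: compatibility of brackets.} The bracket on $\Der_K(\mathcal G)$ is the graded commutator with sign $(-1)^{(p-1)(q-1)}$. Compatibility of brackets is then precisely Theorem~\ref{thm:ad}: $[\adSN(P),\adSN(Q)]=\adSN([P,Q]_{SN})$. We extend from homogeneous elements to all of $\mathcal G[1]$ by bilinearity.

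No step is a real obstacle. The only care needed is sign bookkeeping: one must check that the sign convention of the graded commutator on $\Der_K(\mathcal G)$, which uses the derivation degrees $d_i=p_i-1$, matches the shifted degrees on $\mathcal G[1]$. This holds because both equal $p-1$.
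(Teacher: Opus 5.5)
Your proposal is correct and takes the same route as the paper, which states this corollary without a separate proof as an immediate consequence of Theorem~\ref{thm:ad}: since $\adSN(P)$ has degree $p-1$, equal to the shifted degree of $P$, the graded commutator signs on $\Der_K(\mathcal G)$ match those of $\mathcal G[1]$, and bracket compatibility is exactly $J'(P,Q,\cdot)=0$. Your Step 2 makes explicit a point the paper leaves implicit in Theorem~\ref{thm:Gerstenhaber}, namely that $\mathcal G[1]$ is a graded Lie algebra, obtained from Proposition~\ref{prop:anti} together with the Leibniz form of Jacobi.
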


\begin{remark}
The derivation property of $\adSN(P)$ is used in the proof of Theorem \ref{thm:Jacobi}, but it comes from the Leibniz rule alone. Theorem \ref{thm:ad} is a consequence of the graded Jacobi identity and is not used to prove it. This ordering avoids the circularity that may arise if the adjoint commutator identity and the graded Jacobi identity are derived from one another without an independent starting point.
\end{remark}

\section{Reconstruction of the Lie--Rinehart structure}
Let $G$ be an $A$-module and suppose that $\mathcal G=\bigwedge_A G$, with its exterior product, carries a Gerstenhaber bracket $[\cdot,\cdot]$ of degree $-1$. For degree reasons,
\[
[A,A]\subseteq\mathcal G^{-1}=0,\qquad [G,A]\subseteq A,\qquad [G,G]\subseteq G .
\]
Define
\begin{equation}\label{anchorback}
\rho(x)(a)=[x,a],\qquad [x,y]_G=[x,y],\qquad a\in A,\ x,y\in G.
\end{equation}
Recall that \eqref{Leib1} follows formally from \eqref{GAanti} and \eqref{GAleib}, so it holds for any Gerstenhaber bracket.

\begin{proposition}\label{prop:recLR}
$(G,[\cdot,\cdot]_G,\rho)$ is a Lie--Rinehart algebra over $A$.
\end{proposition}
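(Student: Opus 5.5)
We check each axiom of a Lie--Rinehart algebra by specializing the Gerstenhaber identities \eqref{GAanti}, \eqref{GAjac}, \eqref{GAleib} and \eqref{Leib1} to arguments in $A\cup G$. The degree bookkeeping is straightforward: elements of $A$ have degree $0$, elements of $G$ have degree $1$, and the exterior product $\wedge$ between $A$ and $\mathcal G$ is just the module action.

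\emph{Lie algebra structure on $G$.} For $x,y\in G$ we have $p=q=1$, so \eqref{GAanti} gives $[x,y]=-[y,x]$. With $p=q=r=1$, all signs in \eqref{GAjac} equal $+1$, and it becomes the ordinary Jacobi identity. Bilinearity over $K$ is part of the hypothesis.

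\emph{$\rho(x)$ is a derivation of $A$.} Apply \eqref{GAleib} with $P=x$ and $Q=a$, $R=b\in A$. The sign $(-1)^{(p-1)q}$ equals $1$, so $[x,ab]=[x,a]b+a[x,b]$. Thus $\rho(x)\in\Der_K(A)$.

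\emph{$A$-linearity of $\rho$.} Apply \eqref{Leib1} with $P=a$, $Q=x$, $R=b$. The left side is $[ax,b]$. On the right, the term $[a,b]$ vanishes because it lies in $\mathcal G^{-1}=0$. This leaves $a[x,b]$, so $\rho(ax)(b)=a\rho(x)(b)$.

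\emph{Lie--Rinehart identity \eqref{LR}.} Apply \eqref{GAleib} with $P=x$, $Q=a$, $R=y$. The sign is $1$, so $[x,a\wedge y]=[x,a]\wedge y+a\wedge[x,y]$, which reads $[x,ay]=\rho(x)(a)y+a[x,y]$.

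\emph{$\rho$ is a Lie morphism.} Take $P=x$, $Q=y$, $R=a$ in \eqref{GAjac}. The signs are $(-1)^{0}=1$ in each of the three terms, giving
\[
[x,[y,a]]+[y,[a,x]]+[a,[x,y]]=0.
\]
Antisymmetry with $p=0$, $q=1$ gives $[a,z]=-(-1)^{(0-1)(1-1)}[z,a]=-[z,a]$. Substituting, the identity becomes $\rho(x)\rho(y)(a)-\rho(y)\rho(x)(a)-\rho([x,y])(a)=0$. Since $[\rho(x),\rho(y)]$ is the commutator in $\Der_K(A)$, this says $\rho([x,y]_G)=[\rho(x),\rho(y)]$.

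None of these steps is deep. The only place where care is needed is the sign bookkeeping in the Jacobi step and in converting $[a,x]$ to $-[x,a]$, which I would double-check against the exponent conventions in \eqref{GAjac}. I would also check that \eqref{Leib1} is indeed the form of Leibniz needed for the $A$-linearity step, and that the vanishing of $[a,b]$ there is used correctly.
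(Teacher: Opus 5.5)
Your proposal is correct and follows the paper's proof step by step: you specialize \eqref{GAanti}, \eqref{GAjac}, \eqref{GAleib} and \eqref{Leib1} to arguments in $A\cup G$ for the same five checks, including using \eqref{Leib1} with $P=a$, $Q=x$, $R=b$ for $A$-linearity of $\rho$. The only cosmetic difference is the morphism step, where you apply the cyclic identity \eqref{GAjac} and convert $[a,z]=-[z,a]$ by hand; the paper instead cites the Leibniz form from Lemma~\ref{lem:Jforms}(ii), and your signs there are right.
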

\begin{proof}
\emph{Lie algebra.} For $p=q=r=1$, identities \eqref{GAanti} and \eqref{GAjac} are the skew-symmetry and Jacobi identity of $[\cdot,\cdot]_G$.

\emph{Derivations.} By \eqref{GAleib} with $p=1$, $q=r=0$: $[x,ab]=[x,a]b+a[x,b]$, so $\rho(x)\in\Der_K(A)$.

\emph{$A$-linearity of $\rho$.} By \eqref{Leib1} with $P=a$, $Q=x$, $R=b$:
\[
\rho(ax)(b)=[a\wedge x,b]=a\wedge[x,b]+(-1)^{1\cdot(-1)}[a,b]\wedge x=a\,\rho(x)(b).
\]

\emph{Identity \eqref{LR}.} By \eqref{GAleib} with $P=x$, $Q=a$, $R=y$:
$[x,ay]=[x,a]\,y+a[x,y]=\rho(x)(a)y+a[x,y]_G$.

\emph{Morphism.} For $p=q=1$, $r=0$, the Leibniz form of \eqref{GAjac} (Lemma \ref{lem:Jforms}(ii), whose proof uses only \eqref{GAanti}) gives
$[x,[y,a]]-[y,[x,a]]-[[x,y],a]=0$, that is
$\rho(x)\rho(y)(a)-\rho(y)\rho(x)(a)=\rho([x,y]_G)(a)$.
\end{proof}

\begin{lemma}\label{lem:uniqueness}
A $K$-bilinear bracket on $\mathcal G$ satisfying \eqref{GAanti} and \eqref{GAleib} is uniquely determined by its values on pairs of elements of $A\cup G$.
\end{lemma}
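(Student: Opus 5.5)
The plan is to take two brackets $[\cdot,\cdot]$ and $[\cdot,\cdot]'$ on $\mathcal G$ that both satisfy \eqref{GAanti} and \eqref{GAleib} and agree on all pairs from $A\cup G$, and to show they agree everywhere. By $K$-bilinearity it is enough to compare $[P,Q]$ and $[P,Q]'$ when $P,Q$ are products of generators, i.e.\ of the form $a\,x_1\wedge\cdots\wedge x_p$. The argument rests on the derivation principle of Lemma \ref{lem:dergens}, applied once in each argument.

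\emph{Step 1.} Fix a homogeneous $P\in\mathcal G^p$. By \eqref{GAleib}, each map $[P,\cdot]$ and $[P,\cdot]'$ is a graded derivation of $\mathcal G$ of degree $p-1$. Each is $K$-linear, shifts degree correctly, and satisfies the graded Leibniz rule with sign $(-1)^{(p-1)q}$. Their difference $D_P=[P,\cdot]-[P,\cdot]'$ is therefore also a graded derivation. By Lemma \ref{lem:dergens}, $D_P=0$ as soon as $D_P$ vanishes on $A\cup G$. So it suffices to show that $[P,z]=[P,z]'$ for all homogeneous $P$ and all $z\in A\cup G$.

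\emph{Step 2.} Fix $z\in A\cup G$, of degree $r\in\{0,1\}$. By \eqref{GAanti},
\[
[P,z]=-(-1)^{(p-1)(r-1)}[z,P],
\]
and similarly for the primed bracket. This reduces the claim to showing that $[z,\cdot]=[z,\cdot]'$. Both are graded derivations of degree $r-1$, again by \eqref{GAleib}. Applying Lemma \ref{lem:dergens} a second time, they agree as soon as $[z,w]=[z,w]'$ for all $w\in A\cup G$, and this holds by hypothesis. Combining the two steps, $[P,Q]=[P,Q]'$ for all homogeneous $P,Q$, and hence for all elements by bilinearity.

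\emph{Main obstacle.} The delicate point is that Lemma \ref{lem:dergens} requires genuine derivations of the $K$-algebra $\mathcal G$. I must check that \eqref{GAleib} gives exactly the rule $D(Q\wedge R)=D(Q)\wedge R+(-1)^{dq}Q\wedge D(R)$ with $d=p-1$, including the cases where $Q$ or $R$ lies in $A=\mathcal G^0$. In that case the wedge product is module multiplication, so elements $a\,x_1\wedge\cdots\wedge x_p$ are indeed products of generators. Since \eqref{GAleib} is stated for all homogeneous $Q,R$, including degree $0$, this check holds. No $A$-linearity of the bracket is needed, which matters here because the bracket is only $K$-bilinear.
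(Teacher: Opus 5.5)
Your proof is correct and follows essentially the same route as the paper: \eqref{GAleib} makes $[P,\cdot]$ a graded derivation of degree $p-1$, hence determined by its values on $A\cup G$ by Lemma \ref{lem:dergens}; then \eqref{GAanti} turns $[P,z]$ into $\pm[z,P]$, and a second application of Lemma \ref{lem:dergens} to the derivation $[z,\cdot]$ reduces everything to pairs from $A\cup G$. Comparing two brackets through their difference $D_P$ is simply a more explicit way of writing the paper's uniqueness argument.
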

\begin{proof}
For fixed homogeneous $P$, the map $[P,\cdot]$ is a graded derivation by \eqref{GAleib}, hence is determined by the values $[P,z]$, $z\in A\cup G$, by Lemma \ref{lem:dergens}. By \eqref{GAanti}, $[P,z]=\pm[z,P]$, and $[z,\cdot]$ is again a graded derivation, determined by the values $[z,w]$, $w\in A\cup G$.
\end{proof}

\begin{theorem}[Lie--Rinehart--Gerstenhaber correspondence \cite{Huebschmann1990}]\label{thm:correspondence}
Let $A$ be a commutative unital $K$-algebra and $G$ an $A$-module. The constructions of Section 3 and of Proposition \ref{prop:recLR} are mutually inverse bijections between Lie--Rinehart structures on $G$ over $A$ and Gerstenhaber brackets of degree $-1$ on $\left(\bigwedge_A G,\wedge\right)$. Under this correspondence,
\[
\rho(x)(a)=[x,a]_{SN},\qquad [x,y]_{G}=[x,y]_{SN}.
\]
\end{theorem}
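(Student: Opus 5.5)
We must show that the two constructions are well defined and that each composite is the identity. Let $\Phi$ send a Lie--Rinehart structure $([\cdot,\cdot],\rho)$ on $G$ to the Schouten--Nijenhuis bracket $[\cdot,\cdot]_{SN}$ of Section 3. Let $\Psi$ send a Gerstenhaber bracket $[\cdot,\cdot]$ on $(\bigwedge_AG,\wedge)$ to the pair $([\cdot,\cdot]_G,\rho)$ defined by \eqref{anchorback}.

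\emph{Well-definedness.} $\Phi$ lands in Gerstenhaber brackets of degree $-1$ by Proposition \ref{prop:balancing} and Theorem \ref{thm:Gerstenhaber}. $\Psi$ lands in Lie--Rinehart structures by Proposition \ref{prop:recLR}.

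\emph{The composite $\Psi\circ\Phi$ is the identity.} Start from $([\cdot,\cdot],\rho)$ and form $[\cdot,\cdot]_{SN}$. The values \eqref{gens} give directly $[x,a]_{SN}=\rho(x)(a)$ and $[x,y]_{SN}=[x,y]$. So the reconstructed anchor and bracket are the original ones. This also proves the two displayed formulas of the statement.

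\emph{The composite $\Phi\circ\Psi$ is the identity.} Start from a Gerstenhaber bracket $[\cdot,\cdot]$. Let $([\cdot,\cdot]_G,\rho)=\Psi([\cdot,\cdot])$, and let $[\cdot,\cdot]_{SN}$ be the Schouten--Nijenhuis bracket built from it. Both brackets satisfy \eqref{GAanti} and \eqref{GAleib}, so by Lemma \ref{lem:uniqueness} it suffices to compare them on pairs from $A\cup G$. There are four cases.
\begin{enumerate}
\item On $A\times A$ both vanish for degree reasons.
\item On $G\times A$ we have $[x,a]=\rho(x)(a)=[x,a]_{SN}$ by definition and \eqref{gens}.
\item On $G\times G$ we have $[x,y]=[x,y]_G=[x,y]_{SN}$.
\item On $A\times G$, antisymmetry \eqref{GAanti} with $p=0,q=1$ gives $[a,x]=-(-1)^{(-1)\cdot0}[x,a]=-[x,a]$. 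This matches $[a,x]_{SN}=-\rho(x)(a)$.
\end{enumerate}
Hence the two brackets coincide.

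\emph{Main obstacle.} The delicate point is the use of Lemma \ref{lem:uniqueness}. Its proof applies Lemma \ref{lem:dergens} to $[P,\cdot]$ for arbitrary homogeneous $P$, and then again to $[z,\cdot]$. To compare the two brackets I will make this explicit in two passes. First fix $z\in A\cup G$. The maps $[z,\cdot]$ and $[z,\cdot]_{SN}$ are graded derivations of the same degree that agree on $A\cup G$, so they agree everywhere by Lemma \ref{lem:dergens}. By \eqref{GAanti} we then get $[P,z]=[P,z]_{SN}$ for all homogeneous $P$. Now fix $P$. The maps $[P,\cdot]$ and $[P,\cdot]_{SN}$ are derivations that agree on $A\cup G$, so they agree everywhere. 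Bilinearity extends this to nonhomogeneous elements. This gives the bijection.
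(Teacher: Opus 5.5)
Your proof is correct and follows the paper's argument: $\Psi\circ\Phi=\mathrm{id}$ comes from the low-degree values \eqref{gens}, and $\Phi\circ\Psi=\mathrm{id}$ comes from checking agreement in the four generator cases and then invoking Lemma \ref{lem:uniqueness}. Your two-pass argument (first $[z,\cdot]$ for $z\in A\cup G$, then antisymmetry, then $[P,\cdot]$) is just the proof of Lemma \ref{lem:uniqueness} unpacked in the opposite order. It adds detail but no new ingredient.
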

\begin{proof}
Starting from a Lie--Rinehart structure, the Schouten--Nijenhuis bracket of Theorem \ref{thm:Gerstenhaber} satisfies \eqref{gens}, so Proposition \ref{prop:recLR} returns the original anchor and bracket. Conversely, starting from a Gerstenhaber bracket $[\cdot,\cdot]$, let $[\cdot,\cdot]_{SN}$ be the Schouten--Nijenhuis bracket of the Lie--Rinehart structure \eqref{anchorback}. Both brackets vanish on $A\times A$, and they agree on $G\times A$, on $A\times G$ (by antisymmetry) and on $G\times G$. By Lemma \ref{lem:uniqueness}, they coincide.
\end{proof}

\section{Examples and comments}
\subsection{Vector fields}
For $A=C^\infty(M)$ and $G=\mathfrak X(M)$, since $TM$ is a vector bundle, $\bigwedge_{C^\infty(M)}\mathfrak X(M)$ identifies with the space of multivector fields, and the construction recovers the usual Schouten--Nijenhuis bracket (with the sign convention $[X,f]_{SN}=X(f)$; other conventions in the literature differ by a sign depending on the degrees). In particular, for a multivector field $P$, $\adSN(P)$ is the usual Schouten adjoint action.

\subsection{Derivations of a commutative algebra}
For $G=\Der_K(A)$ with the commutator bracket and identity anchor, $\bigwedge_A\Der_K(A)$ inherits the Gerstenhaber bracket described above. This purely algebraic example exhibits the same mechanism without referring to a smooth manifold.

\section{Conclusion}
We have presented a careful construction of the Gerstenhaber algebra associated with a Lie--Rinehart algebra. The main technical point is the passage from the Lie bracket on $G$ to a well-defined bracket on the $A$-balanced exterior algebra $\bigwedge_A G$. The Lie--Rinehart identities
\[
[x,ay]=a[x,y]+\rho(x)(a)y,
\qquad
[ax,y]=a[x,y]-\rho(y)(a)x
\]
produce correction terms which cancel in the exterior algebra, and this is what makes the passage possible.

Once the bracket is established independently, its graded antisymmetry and biderivation property, together with the Lie--Rinehart axioms on generators, yield the graded Jacobi identity and hence a Gerstenhaber algebra. The inner graded derivations
\[
\adSN(P)=[P,\cdot]_{SN}
\]
then satisfy
\[
[\adSN(P),\adSN(Q)]=\adSN([P,Q]_{SN}),
\]
which is recognized as the adjoint representation of the shifted graded Lie algebra.

The converse construction recovers both the Lie bracket and the anchor from a Gerstenhaber bracket, and the two constructions are mutually inverse. This formulation provides a convenient framework for related constructions in generalized Lie--Rinehart and Lie--Rinehart--Jacobi settings \cite{MandalMishra2018,Okassa2007,Okassa2008,Okassa2011}.

\section*{Funding}
This research received no external funding.

\section*{Author Contributions}
All authors contributed equally to this work. All authors read and approved the final manuscript.

\section*{Data Availability}
Not applicable.

\section*{Conflict of Interest}
The authors declare that they have no conflict of interest.

\section*{Code Availability}
Not applicable.

\end{document}